\theoremstyle{plain}
\newtheorem{theorem}{Theorem}
\def\@journal{Submitted}
\begin{document}

\newcommand\cov{\mathop{\text{Cov}}}
\newcommand\tr{\mathop{\text{tr}}}
\newcommand\topii{{\frac1{2\pi i}}}
\newcommand\mbar{\underline{m}}

\newcommand\bbx{{\mathbf x}}
\newcommand\bbu{{\mathbf u}}
\newcommand\bleu[1]{{\color{black}#1}}
\newcommand\reu[1]{{\color{black}#1}}
\renewcommand{\theenumi}{(\roman{enumi})}

\newcommand{\mydp}[1]{\displaystyle{#1}}
\newcommand{\bs}[1]{\boldsymbol{#1}}
\def\widebar#1{\overline{#1}}

\newtheorem{lemma}{Lemma}
\newtheorem{cor}{Corollary}
\newtheorem{thm}{Theorem}
\newtheorem{prop}{Proposition}
\renewcommand{\theequation}{\thesection.\arabic{equation}}
\renewcommand{\thelemma}{\thesection.\arabic{lemma}}
\renewcommand{\theprop}{\thesection.\arabic{prop}}
\renewcommand{\thethm}{\thesection.\arabic{thm}}
\renewcommand{\thecor}{\thesection.\arabic{cor}}
\newcommand\newsec{%
\setcounter{equation}{0}%
\setcounter{lemma}{0}%
\setcounter{prop}{0}%
\setcounter{thm}{0}%
\setcounter{cor}{0}%
}%

\renewcommand{\theequation}{\thesection.\arabic{equation}}

\newcommand{\al}{\alpha}
\newcommand{\om}{\omega}
\newcommand{\Om}{\Omega}
\newcommand{\La}{\Lambda}
\newcommand{\la}{\lambda}
\newcommand{\De}{\Delta}
\newcommand{\de}{\delta}
\newcommand{\ep}{\epsilon}
\newcommand{\be}{\beta}
\newcommand{\ga}{\gamma}
\newcommand{\Ga}{\Gamma}
\newcommand{\te}{\theta}
\newcommand{\Te}{\Theta}
\newcommand{\si}{\sigma}
\newcommand{\Si}{\Sigma}
\newcommand{\vep}{\varepsilon}
\newcommand{\veps}{\varepsilon}
\newcommand{\eps}{\epsilon}
\newcommand{\ze}{\zeta}
\newcommand{\T}{\mathrm{\tiny T}} 
\newcommand{\pa}{\partial}
\newcommand{\dd}[2]{\frac{\pa #1}{\pa #2}} 

\newcommand{\Z}{\mathbb{Z}}
\newcommand{\R}{\mathbb{R}}
\newcommand{\CC}{\mathbb{C}}   
\newcommand{\Q}{\mathbb{Q}}
\newcommand{\N}{\mathbb{N}}

\def\cov{\mathop{\hbox{\rm cov}}}
\def\var{\mathop{\hbox{\rm var}}}

\newcommand{\PP}{\; \mathbb{P}}
\newcommand{\E}{\;  \mathbb{E}}

\newcommand{\rw}{\rightarrow}
\newcommand{\lrw}{\longrightarrow}
\newcommand{\Rw}{\Rightarrow}
\newcommand{\Lrw}{\Longrightarrow}


\def\minmax{\mathop{\hbox{\rm min\,max}}}
\def\tr{\mathop{\hbox{\rm tr}}}
\def\ln{\mathop{\hbox{\rm $\ell n$}}}
\def\Arg{\mathop{\hbox{\rm Arg}}}
\def\diag{\mathop{\hbox{\rm diag}}}
\def\grad{\mathop{\nabla}}
\def\card{\mathop{\hbox{\rm card}}}
\def\bc{\mathop{\hbox{\mbox{\large\rm $|$}}}}
\def\telque{\; : \;} 
\def\pds#1#2{{\langle #1,#2 \rangle}}         
\def\bpv{\mathop{;}}                          
\def\dfrac#1#2{\frac{\displaystyle #1}{\displaystyle #2}}
\def\ffrac#1{\frac{1}{#1}}
\newcommand{\norm}[2]{ \| #1 \|_{#2} }
\def\un{\mbox{\large 1\hskip-0.30em I}} 
\def\indic#1{\un_{#1}}                        
\def\cf{{\em cf. }} 
\def\sgn{\mathop{\hbox{\rm sgn}}}
\newcommand{\rmx}[1]{{\mbox{\rm #1}}}
\newcommand{\rmxs}[1]{{\mbox{\rm {\scriptsize #1}}}}
\newcommand{\rmxt}[1]{{\mbox{\rm {\tiny #1}}}}
\newcommand{\trsp}[1]{{{#1}^\T}}                 
\newcommand{\fq}[2]{\trsp{#1} #2 #1}

\newcommand{\eq}[1]{(\ref{#1})}         
\newcommand{\cte}{\rmx{cte }}                

\def\slabel#1{\label{#1} \hfill \hskip2mm\hbox to 4mm{\dotfill}%
        \mbox{\tiny\em(#1)}\hbox to 4mm{\dotfill}}

\def\ddemo#1{\noindent{\textbf{Proof {#1}. \quad}}}
\def\findemo{\hskip3mm\mbox{\quad\vbox{\hrule height 3pt depth 6pt width 6pt}}}

\begin{frontmatter}
  \title{A note on the CLT of the LSS for sample covariance matrix from a spiked population model}

\thankstext{T1}{Research of this author was partly supported by
 the National Natural Science Foundation of China (Grant No. 11071213), the Natural Science
Foundation of Zhejiang Province (No. R6090034), and the Doctoral Program Fund of
Ministry of Education (No. J20110031).}
\thankstext{T2}{Research of this author was partly supported by the U.S. Army Research Office under Grant W911NF-09-1-0266.}
\thankstext{T3}{Research of this author was partly supported by
 a HKU start-up grant.}
  \begin{aug}
    \author{\fnms{Qinwen} \snm{Wang}\thanksref{T1}\ead[label=e1]{wqw8813@gmail.com}}
    \and
    \author{\fnms{Jack W.} \snm{Silverstein}\thanksref{T2}\ead[label=e2]{jack@ncsu.edu}}
    \and
    \author{\fnms{Jian-feng} \snm{Yao}\thanksref{T3}\ead[label=e3]{jeffyao@hku.hk}}

    \runauthor{Q. Wang, J. Silverstein and J. Yao}

    \affiliation{Zhejiang  University North Carolina State University and The University of Hong Kong}

    \address{Qinwen Wang  \\
      Department of Mathematics\\
      Zhejiang University \\
      \printead{e1}
    }

    \address{Jack W. Silverstein  \\
      Department of Mathematics\\
      North Carolina State University\\
      \printead{e2}
    }

    \address{Jianfeng Yao \\
      Department of Statistics and Actuarial Science\\
      The University of Hong Kong\\
      Pokfulam, \quad
      Hong Kong \\
      \printead{e3}
    }
  \end{aug}

  \begin{abstract}
    In this note, we establish an asymptotic expansion for the
  centering parameter appearing in the central limit theorems
  for  linear spectral statistic of large-dimensional
  sample covariance matrices when the population has a spiked
  covariance
  structure.
  As an application, we provide an asymptotic power function for
  the corrected likelihood ratio \bleu{statistic}
  for testing the presence of spike eigenvalues in the population
  covariance matrix. This result generalizes \bleu{an existing  formula
  from the literature} where  only one simple spike exists.
  \end{abstract}

  \begin{keyword}[class=AMS]
    \kwd[Primary ]{60F05}
    \kwd[; secondary ] {62H15}
  \end{keyword}

  \begin{keyword}
    \kwd{Large-dimensional sample covariance matrices} \kwd{Spiked population model}
    \kwd{Central limit theorem}
    \kwd{Centering parameter}
    \kwd{factor models}
  \end{keyword}
\end{frontmatter}

\section{Introduction}

Let $(\Sigma_p)$ be a sequence of $p\times p$ non-random and nonnegative
definite Hermitian matrices and let $(w_{ij})$, $i,j\ge 1$ be a
doubly infinite array of i.i.d. complex-valued random variables
satisfying
\[  \label{eq:moments}
  \E(w_{11})=0,~~
  \E(|w_{11}|^2)=1,~~
  \E(|w_{11}|^4)<\infty.
\]
Write $Z_n=(w_{ij})_{1\le i\le p,1\le j \le n}$, the upper-left
$p\times n$ block,  where $p=p(n)$ is
related to $n$ such that when $n\rw \infty$, $p/n\rw y >0$.
 Then the matrix
$S_n=\frac1n \Sigma_p^{1/2} Z_nZ_n^*\Sigma_p^{1/2}$ can be considered as the
sample covariance matrix of an i.i.d. sample
$(\bbx_1,\ldots,\bbx_n)$ of $p$-dimensional observation vectors
$\bbx_j=\Sigma_p^{1/2}{\bbu_j}$ where ${\bbu_j}=(w_{ij})_{1\le i\le p}$
denotes the $j$-th column of $Z_n$. Note that for any \bleu{nonnegative}
definite Hermitian matrix $A$,
$A^{1/2}$ denotes a Hermitian square root and we call the {\em spectral
  distribution} (SD) the distribution generated by its eigenvalues.

Assume that the SD $H_n$
of $\Sigma_p$ converges weakly to a nonrandom
probability distribution $H$ on $[0,\infty)$. It is then well-known
that the SD $F^{S_n}$ of $S_n$, generated by its eigenvalues $\la_{n,1} \ge \cdots\ge\la_{n,p}$,
converges to a nonrandom limiting SD  $G$ \citep{MP,Silverstein95}.
The so-called {\em null case} corresponds to the situation $\Sigma_p\equiv
I_p$, so $H_n\equiv\delta_1$ and the limiting SD is the seminal
 Mar\v{c}enko-Pastur law $G^y$ with index $y$ and
 support $[a_y,b_y]$ where $ a_y=(1-\sqrt y)^2$,
$b_y=(1+\sqrt y)^2$, and an additional mass at the origin if $y>1$.

In this paper we consider the
{\em spiked population model} introduced in
\citet{John01} where
the eigenvalues of $\Sigma_p$ are
\begin{equation}
  \label{model}
  \underbrace{a_1,\cdots,a_1}_{n_1},
  \ldots,
  \underbrace{a_k,\cdots,a_k}_{n_k},
  \underbrace{ 1,\cdots,1}_{p-M}.
\end{equation}
Here $M$   and  the multiplicity numbers $(n_k)$ are fixed and
satisfy $n_1+\cdots+n_k=M$.
In other words,
all the population eigenvalues are unit except
some fixed  number of them (the spikes).
The  model
can be viewed as a {finite-rank  perturbation of the
null case}.
Obviously, the limiting SD $G$ of  $S_n$ is not affected by this
perturbation.
However,  the asymptotic behaviour of the extreme eigenvalues of
$S_n$ is significantly different from  the  {null case}.
The analysis of this new behaviour of extreme eigenvalues has been
an active area in the last few years, see e.g.
\citet{BBP05}, \citet{Baik06}, \citet{Paul07},  \citet{BaiYao08},
\cite{Benaych11},
 \citet{NS},
\cite{BenaychNadakuditi11} and
\citet{BY12}. In particular, the base component of the population SD $H_n$
in the last three references has been extended to a form
more general than the simple Dirac mass $\delta_1$ of
the null case.

For statistical applications, besides the principal components
analysis which is
indeed the origin of spiked models (\cite{John01}),
large-dimensional strict factor models are equivalent to a spiked
population model and can be analyzed using the above-mentioned
results. Related recent contributions in the area include,
among others,
\citet{KN08,KN09},
\citet{Onatski09,Onatski10,Onatski12}
 and
\citet{PY12} and \bleu{they all}   concern the problem
of estimation  and testing
the number of factors (or spikes).

In this note, we analyze  the effects caused by the spike
eigenvalues on the fluctuations of  linear spectral statistics of the form
\begin{equation}\label{stat}
  T_n(f) = \sum_{i=1}^p  f(\la_{n,i}) = F^{S_n}(f)~,
\end{equation}
where $f$ is a given function.
Similarly to the convergence of the SD's,
the presence of the spikes
does not prevent   a  central limit theorem for $T_n(f)$;
however as we will see, the centering term in the CLT will be modified
according to the values of the spikes.
As this  term has no
explicit form, our main result
is  an asymptotic
expansion presented in Section~\ref{main}.
To illustrate the  importance of such expansions, we present
in Section \ref{application}
an application for the determination of the power function
for testing the presence of spikes.
The Appendix contains some technical derivations.

\section{Centering parameter in the CLT of the LSS from a spiked population model}
\label{main}

Fluctuations of linear spectral statistics of form \eqref{stat}
are indeed covered by a central limit theory \bleu{initiated}  in
\citet{BS04}.
The theory was later improved by \citet{PanZhou08} where
the  restriction  $E(|w_{11}|^4)=3$
matching the real Gaussian case  was removed.

Let $f_1,\ldots,f_L$ be $L$ functions analytic on an open domain of
the complex plan  including the
support of the limiting SD.
These central limit theorems state that the random vector
\[
(X_n(f_1), \cdots, X_n(f_L))~,
\]
where
\[
X_n(f)= p \left[F^{S_n}(f) -F^{y_n, H_n}(f)\right]=  p\int f(x)d(F^{S_n}-F^{y_n, H_n})(x)~,
\]
converges  weakly to a Gaussian vector
\[
(X_{f_1}, \cdots, X_{f_L})
\]
with known mean function $E[X_f]$ and covariance function
$Cov(X_f, X_g)$  that can
be calculated from  contour integrals involving parameters $\underline{m}(z)$
and $H$, where $\underline{m}(z)$ is the companion Stieltjes transform
corresponding to the limiting SD of
$\underline{S}_n=\frac{1}{n}Z_n^{*}\Sigma_p Z_n$.
If the population has a spiked covariance structure,
we know that the limit $H$ and $\underline{m}(z)$ remain
the same  as
the non-spiked case, so the limiting  parameters
$E[X_f]$ and $Cov(X_f, X_g)$ are also unchanged.

It is remarked that the  centering parameter $p F^{y_n,  H_n}(f)$ depends on a particular distribution
$F^{y_n, H_n}$ which is a finite-horizon proxy for
the
limiting SD of $S_n$. The difficulty is that  $F^{y_n, H_n}$ has no
explicit form;
it is indeed {\em implicitly}
defined through
$\underline{m}_n(z)$ (the finite counterpart of $\underline{m}(z)$), which solves the equation:
\begin{equation}
  \label{MPequation}
  z = -\frac{1}{\underline{m}_n} + y_n \int\frac{t}{1+t\underline{m}_n}dH_n(t)~.
\end{equation}
This distribution
depends on the SD $H_n$ which in turn depends on the spike eigenvalues.

More precisely, the SD
 $H_n$ of $\Sigma_p$ is
\begin{equation}
  \label{Hn}
  H_n=\frac{p-M}{p}\delta_1 +\frac{1}{p}\sum_{i=1}^{k}n_i\delta_{a_i}~.
\end{equation}
The term
\[
\frac{1}{p}\sum_{i=1}^{k}n_i\delta_{a_i}
\]
vanishes when $p$ tends to infinity, so it has  no influence
when considering  limiting
spectral distributions. However for the CLT,
the term $p F^{y_n, H_n} (f)$ has a $p$ in front,
and
$\frac{1}{p}\sum_{i=1}^{k}n_i\delta_{a_i}$ times $p$ is of order
$O(1)$, thus cannot be neglected.

It is here reminded that, following \citet{Baik06}, for a \reu{{\em distant
spike}} $a_i$ such that $|a_i-1|>\sqrt{y}$, the corresponding sample
eigenvalue is equal to
$\phi(a_i)=a_i+\frac{ya_i}{a_i-1}$, while for a \reu{{\em close spike}} such that $|a_i-1|\leq \sqrt{y}$, the corresponding sample eigenvalue tends to the edge points $a_y$ and $b_y$.

Our main result is an asymptotic expansion for this centering
parameter.

\begin{theorem}\label{formula}
  Suppose the population has a spiked population structure as stated in \eqref{model} \reu{with $k_1$ distant spikes and $k-k_1$ close spikes (arranged in decreasing order)},
Let $f$ be any analytic function on an open
domain including the support of M-P distribution $G^y$ and all the $\phi(a_i)$, $i\leq k_1$. We have:
\begin{align}
  &F^{y_n, H_n}(f)\nonumber\\
  =&-\frac{1}{2\pi i
  p}\oint_{\mathcal{C}_{1}}f(-\frac{1}{\underline{m}}+\frac{y_n}{1+\underline{m}})(\frac{M}{y_n
  \underline{m}}-\sum_{i=1}^{k}\frac{n_i a_i^2\underline{m}}{(1+a_i \underline{m})^2})d\underline{m}\label{th1}\\
+&\frac{1}{2\pi i
  p}\oint_{\mathcal{C}_{1}}f^{'}(-\frac{1}{\underline{m}}+\frac{y_n}{1+\underline{m}})\sum_{i=1}^{k}\frac{(1-a_i)n_i}{(1+a_i
  \underline{m})(1+\underline{m})}(\frac{1}{\underline{m}}-\frac{y_n \underline{m}}{(1+\underline{m})^2})d\underline{m} \label{th2}\\
+&(1-\frac{M}{p}) G^{y_n} (f)+\frac{1}{p}\sum_{i=1}^{k_1}n_if(\phi(a_i))+O(\frac{1}{n^2})\label{th3}~;
\end{align}
Here  $\underline{m}=\underline{m}_n$ is the \bleu{companion}
Stieltjes transform of $F^{y_n,H_n}$ defined in \eqref{MPequation}, $G^{y_n}(f)$ is the integral of $f$ with respect to the Mar\v{c}enko-Pastur distribution
with index $y_n=p/n$. And
\reu{
 \begin{enumerate}
   \item when $0<y_n<1$, the first $k_1$ spike eigenvalues $a_i 's$ satisfy $|a_i-1|>\sqrt{y_n}$, the remaining $k-k_1$ satisfy $|a_i-1|\leq\sqrt{y_n}$, $\mathcal{C}_{1}$ is a  contour counterclockwise, when restricted to the real axes, encloses the interval $[\frac{-1}{1-\sqrt{y_n}},\frac{-1}{1+\sqrt{y_n}}]$;
   \item when $y_n\geq 1$, the first $k_1$ spike eigenvalues $a_i 's$ satisfy $a_i-1>\sqrt{y_n}$, the remaining $k-k_1$ satisfy $0<a_i\leq1+\sqrt{y_n}$,  $\mathcal{C}_{1}$ is a contour clockwise, when restricted to the real axes, encloses the interval $[-1,\frac{-1}{1+\sqrt{y_n}}]$.
 \end{enumerate}
 }
\noindent If there are no distant spikes then the second term in (2.7) does not appear.
\end{theorem}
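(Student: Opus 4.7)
The plan is to represent $F^{y_n,H_n}(f)$ as a Cauchy contour integral in $z$, change variables to $\underline{m}$ via the Mar\v{c}enko--Pastur equation \eqref{MPequation}, isolate the spike perturbation of $H_n$ from its null background $\delta_1$, and Taylor-expand to first order in the small parameter $1/p$.

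Since $f$ is analytic on a neighbourhood of the support of $F^{y_n,H_n}$ (close to the bulk $[a_{y_n},b_{y_n}]$, including the atom at $\{0\}$ when $y_n\ge 1$, and carrying small separated atoms near each $\phi(a_i)$ for $i\le k_1$), Cauchy's formula gives
\begin{equation*}
F^{y_n,H_n}(f)=-\frac{1}{2\pi i}\oint_{\mathcal{C}_z}f(z)\,m_n(z)\,dz,
\end{equation*}
where $\mathcal{C}_z$ is a positively-oriented contour enclosing the entire support and $m_n$ is the Stieltjes transform of $F^{y_n,H_n}$, linked to the companion $\underline{m}_n$ by $m_n(z)=\underline{m}_n(z)/y_n+(1-y_n)/(y_nz)$. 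Parametrizing $z$ by $z=z_n(\underline{m}):=-1/\underline{m}+y_n\int t/(1+t\underline{m})\,dH_n(t)$ and inserting $H_n=\tfrac{p-M}{p}\delta_1+\tfrac{1}{p}\sum n_i\delta_{a_i}$ yields the decomposition
\begin{equation*}
z_n(\underline{m})=z_0(\underline{m})+\delta(\underline{m}),\quad z_0(\underline{m})=-\frac{1}{\underline{m}}+\frac{y_n}{1+\underline{m}},\quad \delta(\underline{m})=\frac{y_n}{p}\sum_{i=1}^{k}n_i\frac{a_i-1}{(1+a_i\underline{m})(1+\underline{m})}=O(p^{-1}),
\end{equation*}
together with a corresponding splitting $z_n'=z_0'+\delta'$. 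A direct check $z_0(-1/(1\pm\sqrt{y_n}))=(1\pm\sqrt{y_n})^2$ shows that the $\underline{m}$-contour $\mathcal{C}_1$ of the statement is exactly the $z_0$-preimage of a contour hugging the M-P bulk, with orientation adjusted for the two regimes.

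Each distant spike $a_i$ ($|a_i-1|>\sqrt{y_n}$) produces a pole of $\delta$ and $\delta'$ at $\underline{m}=-1/a_i$, lying \emph{outside} $\mathcal{C}_1$ by the separation condition, whereas close spikes place $-1/a_i$ \emph{inside} $\mathcal{C}_1$. Since $z_0(-1/a_i)=\phi(a_i)$, the $z_n$-preimage of $\mathcal{C}_z$ (which must enclose each separated atom near $\phi(a_i)$) is $\mathcal{C}_1$ augmented by a small positively-oriented loop around each $-1/a_i$ for $i\le k_1$; direct residue evaluation on these loops yields exactly $\tfrac{1}{p}\sum_{i\le k_1}n_if(\phi(a_i))$. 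On $\mathcal{C}_1$ itself $\delta,\delta'$ are uniformly $O(1/p)$, so I Taylor-expand $f(z_n(\underline{m}))=f(z_0(\underline{m}))+f'(z_0(\underline{m}))\delta(\underline{m})+O(p^{-2})$ and the Stieltjes factor $m_n(z_n)z_n'$ to first order in $1/p$. The $\delta^0$ piece $f(z_0)[\underline{m}/y_n+(1-y_n)/(y_nz_0)]z_0'$ integrates to $G^{y_n}(f)$; after subtracting the $(M/p)\,G^{y_n}(f)$ portion that naturally groups with the next term (reflecting that $H_n$ assigns weight $(p-M)/p$ rather than $1$ to the null mass $\delta_1$), the residue appears as the claimed $(1-M/p)G^{y_n}(f)$. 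The $f(z_0)\delta'$ contribution, after inserting $\delta'(\underline{m})=y_nM/[p(1+\underline{m})^2]-(y_n/p)\sum n_ia_i^2/(1+a_i\underline{m})^2$ and using the algebraic identity $M/(y_n\underline{m})-\sum n_ia_i^2\underline{m}/(1+a_i\underline{m})^2=\underline{m}\{(M/y_n)z_0'+(p/y_n)\delta'\}$ to regroup, produces \eqref{th1}; the $f'(z_0)\delta\cdot z_0'$ contribution, rewritten through $z_0'(\underline{m})=(1/\underline{m})\{1/\underline{m}-y_n\underline{m}/(1+\underline{m})^2\}$, produces \eqref{th2}; and the $O(p^{-2})$ remainder integrates to $O(1/n^2)$ by analyticity of $f$ and the fixed length of $\mathcal{C}_1$.

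The most delicate step is the contour bookkeeping: verifying that the $z_n$-image of $\mathcal{C}_1$ augmented with the small loops around distant-spike preimages does faithfully enclose the whole support of $F^{y_n,H_n}$ (including separated atoms), and tracking which poles of the $\underline{m}$-integrand sit inside $\mathcal{C}_1$ in each of the two regimes $y_n<1$ and $y_n\ge 1$---the clockwise orientation in case~(ii) arises precisely because the bulk then borders the atom of $F^{y_n,H_n}$ at $z=0$, forcing a different sheet of $z_n^{-1}$. This rests on monotonicity properties of $z_n$ outside the bulk preimage, analogous to the analyses in \citet{Silverstein95,Baik06}, together with the observation that $|a_i-1|=\sqrt{y_n}$ is precisely the threshold separating $-1/a_i$ from $\mathcal{C}_1$. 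Once the correct contours are in hand, the Taylor expansion and the $O(1/n^2)$ error control are routine.
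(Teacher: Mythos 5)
Your overall route mirrors the paper's: represent $F^{y_n,H_n}(f)$ by Cauchy's formula in $z$, pass to the $\underline{m}$ variable via the MP equation, split $z_n=z_0+\delta$ into the null part plus an $O(1/p)$ spike perturbation, separate the distant-spike loops $\mathcal{C}_{a_i}$ from the bulk contour $\mathcal{C}_1$, evaluate residues on the former to produce $\frac{1}{p}\sum_{i\le k_1}n_i f(\phi(a_i))$, and Taylor-expand to first order on $\mathcal{C}_1$. Your regrouping identity is correct, and your accounting of the $-\frac{M}{p}G^{y_n}(f)$ piece as coming from the $f(z_0)\,\delta'$ contribution matches the paper's computation. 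The $0<y_n<1$ case is handled correctly.

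There is, however, a gap in the $y_n\geq 1$ case. You assert that $\mathcal{C}_1$ is ``exactly the $z_0$-preimage of a contour hugging the M-P bulk, with orientation adjusted.'' When $y_n\geq 1$ this is false: the $z_0$-preimages of the bulk edges $a_{y_n}$ and $b_{y_n}$ are $\frac{-1}{1-\sqrt{y_n}}=\frac{1}{\sqrt{y_n}-1}>0$ and $\frac{-1}{1+\sqrt{y_n}}<0$, so the direct preimage of a small loop around $[a_{y_n},b_{y_n}]$ encircles $\bigl[\frac{-1}{1+\sqrt{y_n}},\frac{1}{\sqrt{y_n}-1}\bigr]$, which contains $0$ and not $-1$ --- the complement of the theorem's $\mathcal{C}_1$. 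Moreover, if you insist on a $z$-contour $\mathcal{C}_z$ enclosing the atom at $z=0$, its $\underline{m}$-preimage is unbounded, since $z_n(\underline{m})\to 0$ as $\underline{m}\to\infty$. The paper resolves this by first excluding $z=0$ from $\gamma_1$ and writing the atom explicitly as $(1-\tfrac{1}{y_n})f(0)$, then extending the bounded $\underline{m}$-contour by a large circle $\mathcal{C}(r)$; by dominated convergence the integral over $\mathcal{C}(r)$ tends to $\frac{1-y_n}{y_n}f(0)$ as $r\to\infty$, cancelling the atom term and converting the contour into one enclosing $[-1,\frac{-1}{1+\sqrt{y_n}}]$ and not $0$. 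Your remark about ``a different sheet of $z_n^{-1}$'' gestures at this but does not supply an argument. For a complete proof you need to reproduce (or replace) this large-circle step, and similarly for the boundary case $y_n=1$, where the preimage of the left bulk edge $0$ is $\underline{m}=\infty$.
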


\begin{proof}

We divide the proof  into three parts according to whether $0<y_n<1$, $y_n>1$ or $y_n=1$.
\bigskip

\noindent\reu{\underline{Case of $0<y_n<1$:}}

Recall that $G^{y_n}(f)=\int f(x)d G^{y_n}(x)$  when no spike exists, where $G^{y_n}$ is the M-P distribution with index $y_n$.
And by the Cauchy integral formula,
it can be expressed as $-\frac{1}{2\pi i}\oint_{\gamma_1} f(z)m(z)dz$, where the integral contour $\gamma_1$
is chosen to be positively oriented, enclosing the support of $G^{y_n}$ and it's limit $G^y$. Due to the restriction that $0<y_n<1$, we choose $\gamma_1$ such that the origin $\{z=0\}$ is not enclosed inside.

Using the relationship between $m(z)$ and $\underline{m}(z)$ (the companion Stieltjes transform of $m(z)$): ~$\underline{m}(z)=y_nm(z)-\frac{1-y_n}{z}$, we can rewrite
\begin{eqnarray}\label{1}
G^{y_n}(f)&=&-\frac{1}{2\pi i}\oint_{\gamma_1} f(z)m(z)dz=-\frac{1}{2\pi i}\oint_{\gamma_1} f(z)\Big(\frac{\underline{m}(z)}{y_n}+\frac{1-y_n}{y_nz}\Big)dz\nonumber\\
&=&-\frac{n}{p}\frac{1}{2\pi i}\oint_{\gamma_1} f(z)\underline{m}(z)dz~.
\end{eqnarray}
Besides, for $z \notin supp(G^{y_n})$, $\underline{m}(z)$ satisfies the equation:
\begin{eqnarray}\label{2}
z=-\frac{1}{\underline{m}}+\frac{y_n}{1+\underline{m}}~.
\end{eqnarray}
Taking derivatives on both sides with respect to $z$, we get:
\begin{eqnarray*}
dz=(\frac{1}{\underline{m}^2}-\frac{y_n}{(1+\underline{m})^2})d \underline{m}~.
\end{eqnarray*}
Changing the variable from $z$ to $\underline{m}$ in equation \eqref{1}, we get:
\begin{eqnarray}\label{mp}
G^{y_n}(f)
=-\frac{n}{p}\frac{1}{2\pi i}\oint_{\mathcal{C}_1}f(-\frac{1}{\underline{m}}+\frac{y_n}{1+\underline{m}})
\underline{m}(z)(\frac{1}{\underline{m}^2}-\frac{y_n}{(1+\underline{m})^2})d \underline{m}~.
\end{eqnarray}
Here, the contour $\gamma_1$ of $z$ in equation \eqref{1} is transformed into a contour of $\underline{m}$ through the mapping \eqref{2}, denoted as $\mathcal{C}_1$.

We present the mapping \eqref{2} when $0<y_n<1$ in Figure \ref{graph}, restricting $z$ and $\underline{m}$ to the real \bleu{domain}.  From \citet{SC95}, we know that the $z's$ such that $z^{'}(m)>0$ are not in the support of $G^{y_n}$. Therefore, we shall focus on the increasing intervals, where a one-to-one mapping between $z$ and $\underline{m}$ exists. From the figure, we see that when $\gamma_1$ is chosen to enclose the support of $\reu{G^{y_n}}$: $[a_{y_n}, b_{y_n}]$, the corresponding $\mathcal{C}_1$ will enclose the interval $[\frac{-1}{1-\sqrt{y_n}},\frac{-1}{1+\sqrt{y_n}}]$, and $\underline{m}=-1$ is the pole contained in this interval.
The point on $\gamma_1$ intersecting the real line to the left of $a_{y_n}$ (right of $b_{y_n}$) maps to a point to the left of $\frac{-1}{1-\sqrt{y_n}}$ (right of $\frac{-1}{1+\sqrt{y_n}}$).
Since the imaginary part of $\underline m(z)$ is the same sign as the imaginary part of $z$, we see that $\mathcal{C}_1$ is also oriented counterclockwise.
\begin{figure}[tbp]
\centering
\includegraphics[width=7.5cm]{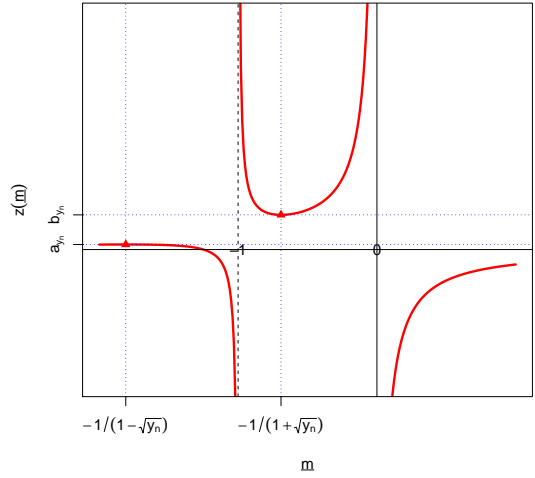}
\caption{The graph of the transform $z(\underline{m})=-\frac{1}{\underline{m}}+\frac{y_n}{1+\underline{m}}$ when $0<y_n<1$. }\label{graph}
\end{figure}

When the spiked structure \eqref{model} exists, by equation
\eqref{MPequation}, this time the companion Stieltjes transform
$\underline{m}=\underline{m}_n$ of \bleu{ $F^{y_n,H_n}$} satisfies
\begin{eqnarray}
&&z=-\frac{1}{\underline{m}}+\frac{p-M}{p}\frac{y_n}{1+\underline{m}}+\frac{y_n}{p}\sum_{i=1}^{k}\frac{a_i n_i}{1+a_i \underline{m}}~,\label{zspike}\label{zmm}\\
&&dz=\bigg(\frac{1}{\underline{m}^2}-\frac{p-M}{p}\frac{y_n}{(1+\underline{m})^2}-\frac{y_n}{p}\sum_{i=1}^{k}\frac{a_i^2 n_i}{(1+a_i \underline{m})^2}\bigg)d \underline{m}~.\nonumber
\end{eqnarray}
Repeating the same computation as before, we get:
\begin{eqnarray}\label{fyh}
&&F^{y_n, H_n}(f)=-\frac{n}{p}\frac{1}{2\pi i}\oint_{\gamma}f(z)\underline{m}(z)dz\nonumber\\
&=&-\frac{n}{p}\frac{1}{2\pi i}\oint_\mathcal{C}f\bigg(-\frac{1}{\underline{m}}+\frac{y_n}{1+\underline{m}}-\frac{y_n}{p}\sum_{i=1}^{k}\frac{(1-a_i)n_i}{(1+\underline{m})(1+a_i \underline{m})}\bigg)
\underline{m}\nonumber\\
&&\times\bigg(\frac{1}{\underline{m}^2}-\frac{y_n}{(1+\underline{m})^2}+\frac{y_n}{p}\sum_{i=1}^{k}n_i\bigg[\frac{1}{(1+\underline{m})^2}-\frac{a_i^2}{(1+a_i \underline{m})^2}\bigg]\bigg)d \underline{m},
\end{eqnarray}
where $\gamma$ \bleu{is}  a positively oriented contour of $z$ that
encloses the support of $F^{S_n}$ and its limit $F^{S}$. From
\citet{Baik06}, we know that under the spiked structure \eqref{model}, the support of $F^{S_n}$
consists of the support of M-P distribution: $[a_{y_n},b_{y_n}]$ plus small intervals
near $\phi(a_i)=a_i+\frac{y_na_i}{a_i-1}$
$(i=1, \cdots, k_1)$. Therefore, the contour $\gamma$ can be expressed
as $\gamma_1\bigoplus (\bigoplus_{i=1}^{k_1} \gamma_{a_i})$
($\gamma_{a_i}$ is denoted as the contour that encloses the point of
$\phi(a_i)$).
\bleu{Moreover}, $\mathcal{C}$ is the image of $\gamma$ under the
mapping \eqref{zmm}, which can also be divided into $\mathcal{C}_1$
plus $\mathcal{C}_{a_i}$ $(i=1, \cdots, k_1)$, with $\mathcal{C}_{a_i}$
enclos\bleu{ing}
$-\frac{1}{a_i}$ and all \bleu{the contours} are non-overlapping and positively oriented.

The term
\begin{eqnarray*}
\frac{y_n}{p}\sum_{i=1}^{k}\frac{(1-a_i)n_i}{(1+\underline{m})(1+a_i \underline{m})}
\end{eqnarray*}
is of order $O(\frac1n)$, so we can take the Taylor expansion of $f$ around the value of $-\frac{1}{\underline{m}}+\frac{y_n}{1+\underline{m}}$, and the term
\begin{eqnarray*}
\frac{y_n}{p}\sum_{i=1}^{k}n_i\bigg[\frac{1}{(1+\underline{m})^2}-\frac{a_i^2}{(1+a_i \underline{m})^2}\bigg]
\end{eqnarray*}
is also of order $O(\frac1n)$.  This gives rise to:
\begin{eqnarray}\label{general}
&&F^{y_n, H_n}(f)
=-\frac{n}{p}\frac{1}{2\pi i}\oint_{\mathcal{C}}f(-\frac{1}{\underline{m}}+\frac{y_n}{1+\underline{m}})(\frac{1}{\underline{m}}-\frac{y_n\underline{m}}{(1+\underline{m})^2})d\underline{m}\nonumber\\
&&-\frac{n}{p}\frac{1}{2\pi i}\oint_{\mathcal{C}}f(-\frac{1}{\underline{m}}+\frac{y_n}{1+\underline{m}})\frac{y_n}{p}\sum_{i=1}^{k}n_i\bigg[\frac{1}{(1+\underline{m})^2}-\frac{a_i^2}{(1+a_i \underline{m})^2}\bigg]\underline{m} d\underline{m}\nonumber\\
&&+\frac{n}{p}\frac{1}{2\pi i}\oint_{\mathcal{C}}f^{'}(-\frac{1}{\underline{m}}+\frac{y_n}{1+\underline{m}})\frac{y_n}{p}\sum_{i=1}^{k}\frac{(1-a_i)n_i}{(1+\underline{m})(1+a_i \underline{m})}(\frac{1}{\underline{m}}-\frac{y_n\underline{m}}{(1+\underline{m})^2})d\underline{m}\nonumber\\
&&+O(\frac{1}{n^2})~.
\end{eqnarray}

Then, we replace $\mathcal{C}$ appearing in equation \eqref{general} by $\mathcal{C}_1\bigoplus(\bigoplus_{i=1}^{k_1}\mathcal{C}_{a_i})$ as mentioned above, and thus we can calculate the value of \eqref{general} separately by calculating the integrals on the contour $\mathcal{C}_{1}$ and each $\mathcal{C}_{a_i}$ $(i=1, \cdots, k_1)$.  If there are no distant spikes then we
will have just \reu{$\mathcal{C}=\mathcal{C}_1$}.

The first term in equation \eqref{general} is equal to
\begin{eqnarray}\label{xin1}
 -\frac{n}{p}\frac{1}{2\pi i}\oint_{\mathcal{C}_1}f(-\frac{1}{\underline{m}}+\frac{y_n}{1+\underline{m}})(\frac{1}{\underline{m}}-\frac{y_n\underline{m}}{(1+\underline{m})^2})d\underline{m}
\end{eqnarray}
for the reason that the only poles: $\underline{m}=0$ and $\underline{m}=-1$ are not enclosed in the contours $C_{a_i}$ $(i=1, \cdots, k_1)$.

Next, we consider these integrals on $\mathcal{C}_{a_i} (i=1, \cdots, k_1)$.

The second term of equation \eqref{general} with the contour being $\mathcal{C}_{a_i}$ is equal to
\begin{eqnarray*}
&&-\frac{n}{p}\frac{1}{2\pi i}\oint_{\mathcal{C}_{a_i}}f(-\frac{1}{\underline{m}}+\frac{y_n}{1+\underline{m}})\frac{y_n}{p}\sum_{i=1}^{k}n_i\bigg[\frac{1}{(1+\underline{m})^2}-\frac{a_i^2}{(1+a_i \underline{m})^2}\bigg]\underline{m} d\underline{m}\\
&=&\frac{n}{p}\frac{1}{2\pi in}\oint_{\mathcal{C}_{a_i}}f(-\frac{1}{\underline{m}}+\frac{y_n}{1+\underline{m}})\sum_{i=1}^{k}\frac{a_i^2n_i \underline{m}}{(1+a_i \underline{m})^2}d\underline{m}\\
&=&\frac{1}{2\pi i p}\oint_{\mathcal{C}_{a_i}}\frac{f(-\frac{1}{\underline{m}}+\frac{y_n}{1+\underline{m}})\underline{m} n_i}{(\underline{m}+\frac{1}{a_i})^2}d\underline{m}\\
&=&\frac{n_i}{p}\Big[f(\phi(a_i))-f^{'}(\phi(a_i))\Big(a_i-\frac{y_na_i}{(a_i-1)^2}\Big)\Big]~,
\end{eqnarray*}
and the third term of equation \eqref{general} with the contour being $\mathcal{C}_{a_i}$ is equal to
\begin{eqnarray*}
&&\frac{n}{p}\frac{1}{2\pi i}\oint_{\mathcal{C}_{a_i}}f^{'}(-\frac{1}{\underline{m}}+\frac{y_n}{1+\underline{m}})\frac{y_n}{p}\sum_{i=1}^{k}\frac{(1-a_i)n_i}{(1+\underline{m})(1+a_i \underline{m})}(\frac{1}{\underline{m}}-\frac{y_n\underline{m}}{(1+\underline{m})^2})d\underline{m}\\
&=&\frac{-1}{2\pi i p}\oint_{\mathcal{C}_{a_i}}f^{'}(-\frac{1}{\underline{m}}+\frac{y_n}{1+\underline{m}})\frac{n_i(1-a_i)}{(\underline{m}+\frac{1}{a_i})a_i(\underline{m}+1)}(\frac{1}{\underline{m}}-\frac{y_n\underline{m}}{(1+\underline{m})^2})d\underline{m}\\
&=&\frac{1}{p}n_if^{'}(\phi(a_i))\Big(a_i-\frac{y_na_i}{(a_i-1)^2}\Big)~.
\end{eqnarray*}
Combining these two terms, we get the influence of the distant spikes, that is, the integral on the contours $\bigcup_{i=1, \cdots, k_1}\mathcal{C}_{a_i}$, which equals to:
\begin{eqnarray}\label{spikevalue}
\frac{1}{p}\sum_{i=1}^{k_1}n_if(\phi(a_i))~.
\end{eqnarray}

So in the remaining part, we only need to consider the integral along the contour $\mathcal{C}_1$.
Consider the second term of \eqref{general} with the contour being $\mathcal{C}_1$:
\begin{eqnarray}\label{onevalue}
&&-\frac{n}{p}\frac{1}{2\pi i}\oint_{\mathcal{C}_1}f(-\frac{1}{\underline{m}}+\frac{y_n}{1+\underline{m}})\frac{y_n}{p}\sum_{i=1}^{k}n_i\bigg[\frac{1}{(1+\underline{m})^2}-\frac{a_i^2}{(1+a_i \underline{m})^2}\bigg]\underline{m} d\underline{m}\nonumber\\
&=&-\frac{1}{2\pi i p}\oint_{\mathcal{C}_1}f(-\frac{1}{\underline{m}}+\frac{y_n}{1+\underline{m}})\bigg[\frac{1}{y_n}(\frac{M \underline{m} y_n}{(1+\underline{m})^2}-\frac{M}{\underline{m}})+\frac{1}{y_n}\frac{M}{\underline{m}}-\sum_{i=1}^{k}\frac{n_i a_i^2\underline{m}}{(1+a_i \underline{m})^2}\bigg]d\underline{m}\nonumber\\
&=&-\frac Mp\frac{n}{2\pi i p}\oint_{\mathcal{C}_1}f(-\frac{1}{\underline{m}}+\frac{y_n}{1+\underline{m}})\big(\frac{\underline{m} y_n}{(1+\underline{m})^2}-\frac{1}{\underline{m}}\big)d\underline{m}\nonumber\\
&&-\frac{1}{2\pi i p}\oint_{\mathcal{C}_1}f(-\frac{1}{\underline{m}}+\frac{y_n}{1+\underline{m}})\bigg(\frac{M}{\underline{m}y_n}-\sum_{i=1}^{k}\frac{n_i a_i^2 \underline{m}}{(1+a_i \underline{m})^2}\bigg)d\underline{m}~.
\end{eqnarray}

Combining  Equations \eqref{mp}, \eqref{xin1}, \eqref{spikevalue} and \eqref{onevalue}, we get:
\begin{eqnarray*}
&&F^{y_n, H_n}(f)=-\frac{1}{2\pi i p}\oint_{\mathcal{C}_1}f(-\frac{1}{\underline{m}}+\frac{y_n}{1+\underline{m}})(\frac{M}{\underline{m}y_n}-\sum_{i=1}^{k}\frac{n_i a_i^2 \underline{m}}{(1+a_i \underline{m})^2})d\underline{m}\\
&&+\frac{1}{2\pi ip}\oint_{\mathcal{C}_1}f^{'}(-\frac{1}{\underline{m}}+\frac{y_n}{1+\underline{m}})\sum_{i=1}^{k}\frac{(1-a_i)n_i}{(1+\underline{m})(1+a_i \underline{m})}(\frac{1}{\underline{m}}-\frac{y_n\underline{m}}{(1+\underline{m})^2})d\underline{m}\\
&&+(1-\frac{M}{p})G^{y_n}(f)+\sum_{i=1}^{k_1}\frac{n_i}{p}f(\phi(a_i))+O(\frac{1}{n^2})~.\nonumber
\end{eqnarray*}

\noindent \reu{\underline{Case of $y_n>1$:}}

We also present the mapping \eqref{2} when $y_n>1$ in Figure \ref{graph2}  below.
\begin{figure}[h]
\centering
\includegraphics[width=7.5cm]{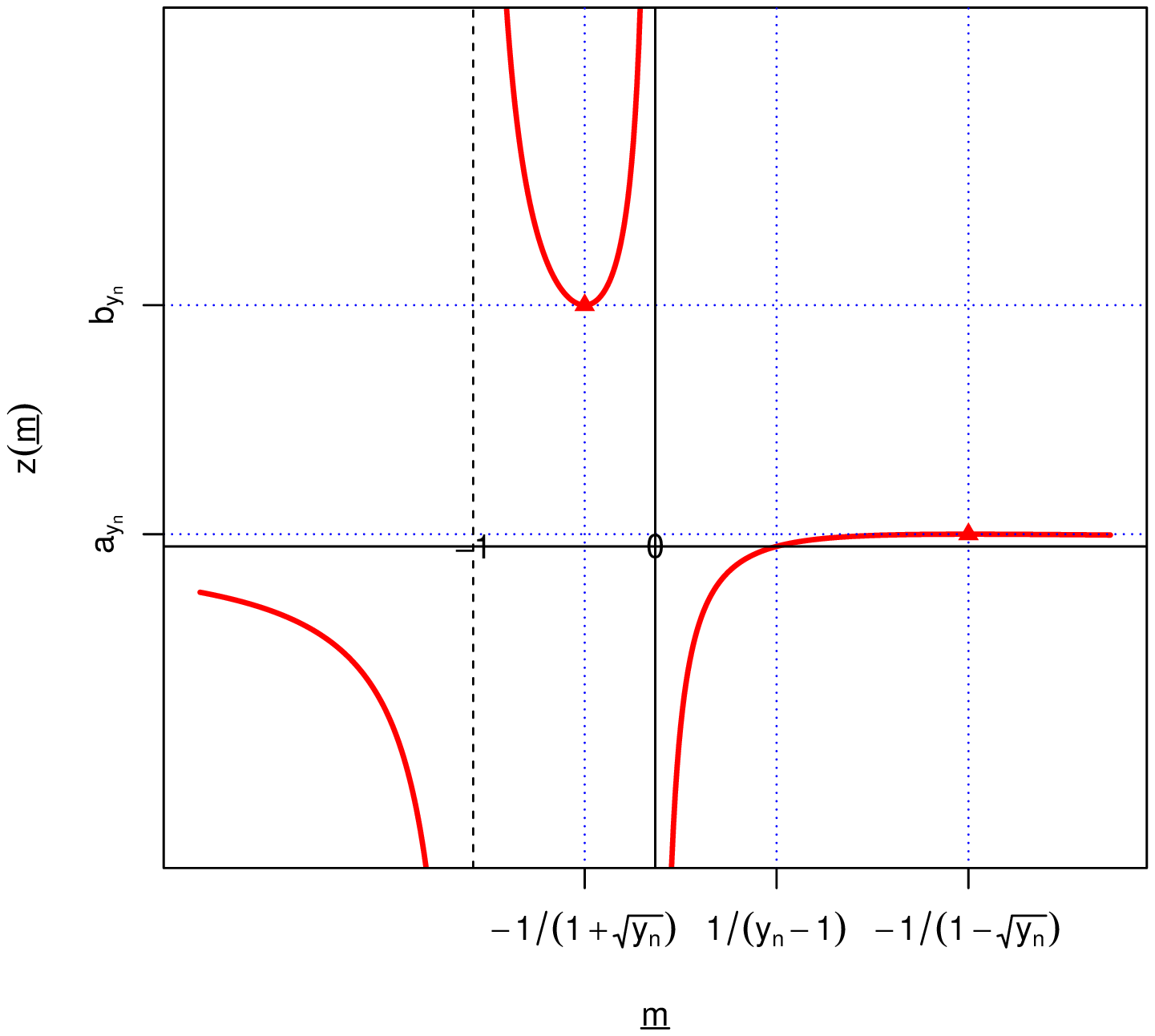}
\caption{The graph of the transform $z(\underline{m})=-\frac{1}{\underline{m}}+\frac{y_n}{1+\underline{m}}$ when $y_n>1$. }\label{graph2}
\end{figure}

When $y_n>1$ there will be mass $1-1/y_n$ at zero.  Assume first that $f$ is analytic on an open interval containing 0 and $b_{y_n}$ and let $\gamma_1$ be a contour covering $[a_{y_n},b_{y_n}]$.
 Then we have \reu{in place  of \eqref{1},}
\begin{eqnarray*}\label{Gb11}
G^{y_n}(f)&=&(1-\frac1{y_n})f(0)-\frac1{2\pi i}\oint_{\gamma_1}f(z)m(z)dz\\
&=&(1-\frac1{y_n})f(0)-\frac1{2\pi iy_n}\oint_{\gamma_1}f(z)\underline m(z)dz.
\end{eqnarray*}
This time the $\underline m$ value corresponding to $a_{y_n}$, namely $\frac{-1}{1-\sqrt {y_n}}$, is positive, and so when changing variables the new contour $\mathcal{C}$ covers
$[c_n,d_n]$ where $c_n<0$ is slightly to the right of $\frac{-1}{1+\sqrt {y_n}}$, and $d_n>0$ is slightly to the left of $\frac{-1}{1-\sqrt {y_n}}$, This interval includes the origin and
not $-1$, and is oriented in a clockwise direction.
We present these two contours $\gamma_1$ and \reu{$\mathcal{C}_1$} in  Figure \ref{zm}.
\begin{figure}[htbp]
\centering
\begin{minipage}[c]{0.5\textwidth}
\centering
\includegraphics[width=6.5cm]{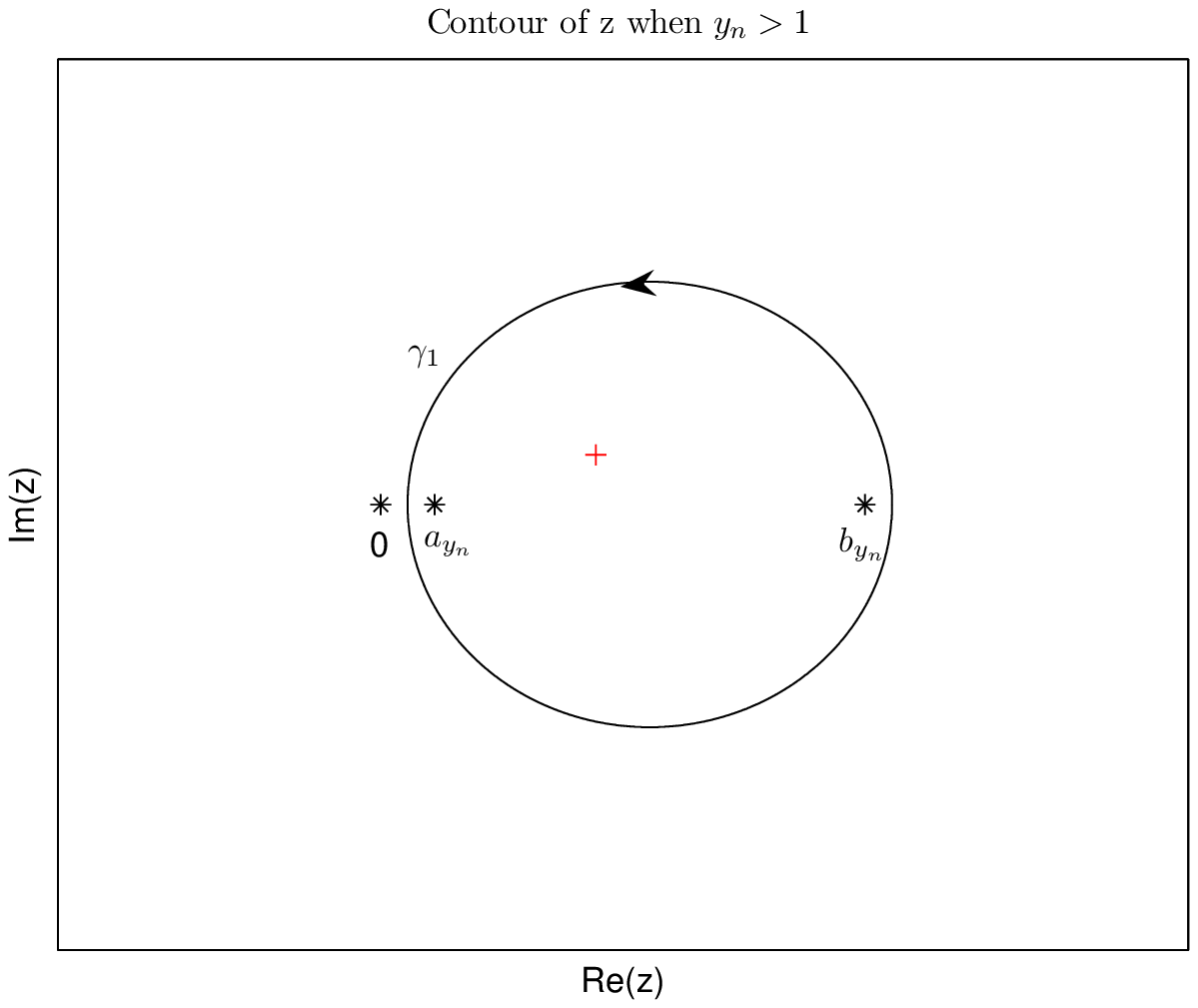}
\end{minipage}%
\begin{minipage}[c]{0.5\textwidth}
\centering
\includegraphics[width=6.5cm]{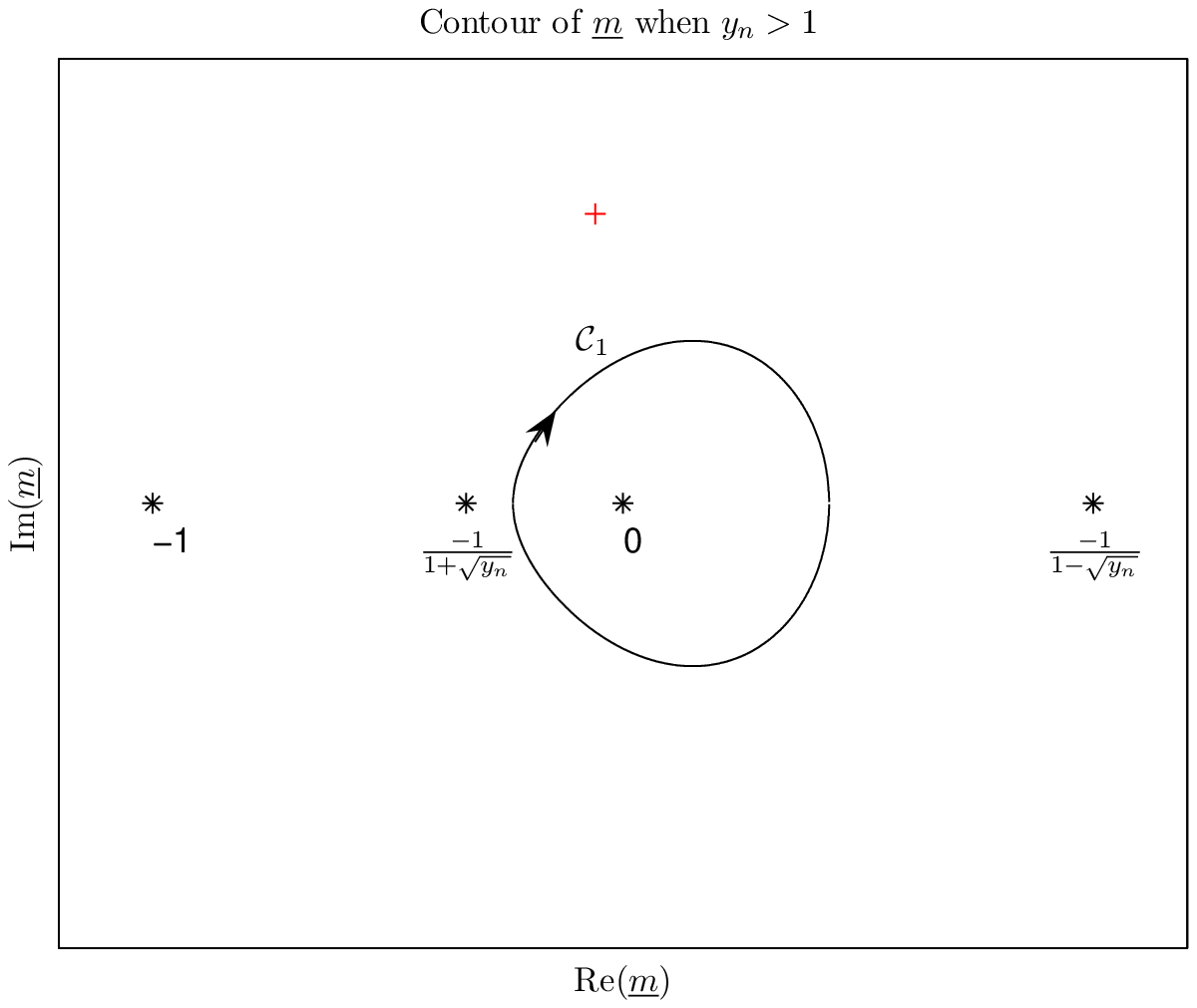}
\end{minipage}
\caption{Contours of $z$ and $\underline{m}$ when $y_n>1$. }\label{zm}
\end{figure}

\noindent We have \reu{in place of \eqref{mp},}
\begin{eqnarray*}\label{Gb12}
G_{y_n}(f)=(1-\frac1{y_n})f(0)-\frac1{2\pi iy_n}\oint_{\reu{\mathcal{C}_1}}f(-\frac1{\underline m}+\frac{y_n}{1+\underline m})\underline m(\frac1{\underline m^2}-\frac{y_n}{(1+\underline m)^2})
d\underline m.
\end{eqnarray*}

Extend \reu{$\mathcal{C}_1$} to the following contour.  On the right side on the real line continue \reu{$\mathcal{C}_1$} to a number large number $r$, then go on a circle $\mathcal{C}(r)$ with radius $r$
in a counterclockwise direction until it returns to the point \reu{$r-i0$}, then go left till it hits \reu{$\mathcal{C}_1$}.  This new contour covers pole $-1$ and not the origin, see Figure \ref{new}.
\begin{figure}[h]
\centering
\includegraphics[width=7.5cm]{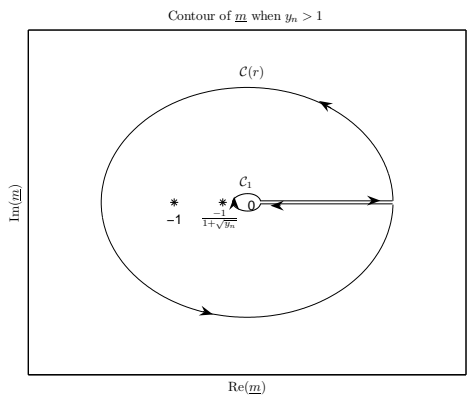}
\caption{The new contour of $\underline{m}$ when $y_n>1$}\label{new}
\end{figure}
 On $\mathcal{C}(r)$
we have using the dominated convergence theorem
\begin{eqnarray*}
 &&\frac1{2\pi iy_n}\oint_{\mathcal{C}(r)}f(-\frac1{\underline m}+\frac{y_n}{1+\underline m})\underline m(\frac1{\underline m^2}-\frac{y_n}{(1+\underline m)^2})
d\underline m\\
 &&(\text{with } \underline m=re^{i\theta})\\
&=&\frac1{2\pi y_n}\int_0^{2\pi}f(-\frac1{\underline m}+\frac{y_n}{1+\underline m})(1-\frac{y_n\underline m^2}{(1+\underline m)^2})
d\theta\\
&&\to \quad \frac{1-y_n}{y_n}\reu{f(0)}\quad(\text{as }r\to\infty).\quad
\end{eqnarray*}
Therefore
\begin{eqnarray}\label{Gb1}
G_{y_n}(f)=-\frac np\frac1{2\pi i}\oint_{\reu{\mathcal{C}_1}}f(-\frac1{\underline m}+\frac{y_n}{1+\underline m})\underline m(\frac1{\underline m^2}-\frac{y_n}{(1+\underline m)^2})
d\underline m.
\end{eqnarray}
where \reu{$\mathcal{C}_1$} just covers \reu{$[-1, \frac{-1}{1+\sqrt {y_n}}]$}.

When there are spikes the only distant ones are those for which $a_i>1+\sqrt {y_n}$.  We will get after the change of variable to $\underline m$ a contour which covers now
$[c'_n,d'_n]$ where $c'_n<0$ is to the right of the largest of $-\frac1{a_i}$ among the distant  spikes (to the right of $\frac{-1}{1+\sqrt {y_n}}$ if there are no
distant spikes), and $d'_n>0$ is to the left of $\frac{-1}{1-\sqrt {y_n}}$, and oriented clockwise.  We can extend the contour as we did before and get the same limit on the
circle as when there are no spikes.  Therefore we get exactly \eqref{fyh} where now the contour $\mathcal{C}$ contains $-1$ and the largest of $-\frac1{a_i}$ among the
distant spikes (contain $\frac{-1}{1+\sqrt {y_n}}$ if there are no distant spikes).
 \reu{Next, we can follow the same proof as for the case $0<y_n<1$, by slitting the contour $\mathcal{C}$ into $\mathcal{C}=\mathcal{C}_1\bigoplus (\bigoplus_{i=1}^{k_1}\mathcal{C}_{a_i})$, where now $\mathcal{C}_1$ just contains the interval $[-1,\frac{-1}{1+\sqrt{y_n}}]$ and the contours $\mathcal{C}_{a_i}$ contain  the influence of $k_1$ distant spikes $a_i>1+\sqrt{y_n}$: $-\frac{1}{a_i}$ ($i=1,\cdots, k_1$), respectively. We thus obtain the same formula as in the case $0<y_n<1$.}
 Therefore Theorem 1 follows where $\mathcal{C}_1$ contains just \reu{$[-1,\frac{-1}{1+\sqrt {y_n}}]$},
and none of the $-\frac1{a_i}$ among the distant spikes ($-\frac{1}{a_i}$ are enclosed in the contour $\mathcal{C}_{a_i}$ as the case of $0<y_n<1$).
\smallskip

\noindent \reu{\underline{Case of $y_n=1$:}}

For $y_n=1$ we have $m(z)=\underline m(z)$, and the contour defining $G_1(f)$ must contain the interval $[0,4]$.  The contour in $\underline{m}$ contains $[c_n,d_n]$ where $-\frac12<c_n<0$,
$d_n>0$ and again is oriented in the clockwise direction.   Extending again this contour we find the limit of the integral on the circle is zero for both $G_1(f)$ and
$F^{1,H_n}(f)$, and we get again Theorem 1 where $\mathcal{C}_1$ is a contour containing \reu{$[-1,-\frac12]$}, and not the origin.

The proof of the theorem is complete.
\end{proof}

%


\section{An application to the test of presence of spike eigenvalues}
\label{application}

In
\citet{JBZ09}, a corrected likelihood ratio statistic
 $\tilde{L}^{*}$ is proposed to test the hypothesis
\[
H_0: ~\Sigma=I_p~~~vs.~~~H_1:~ \Sigma\neq I_p~.
\]
They prove that under $H_0$,
\[
\tilde{L}^{*}-p G^{y_n,H_n}(g)\Rightarrow N(m(g), v(g))~,
\]
where
\begin{eqnarray*}
&&\tilde{L}^{*}=tr S_n-\log|S_n|-p~,\\
&& G^{y_n,H_n}(g)=1-\frac{y_n-1}{y_n}\log(1-y_n)~,\\
&&m(g)=-\frac{\log(1-y)}{2}~,\\
&&v(g)=-2\log(1-y)-2y.
\end{eqnarray*}
At a significance level $\alpha$ (usually 0.05), the test will reject
$H_0$ when
$\tilde{L}^{*}-p G^{y_n,H_n}(g) > m(g) + \Phi^{-1}(1-\alpha)\sqrt{v(g)}$
where $\Phi$ is the standard normal cumulative distribution function.

However, the power function of this test remains unknown because the
distribution of  $\tilde{L}^{*}$ under the general alternative
hypothesis
$H_1$  is ill-defined.
Let's consider this general test as a way to test the null hypothese
$H_0$ above against an alternative hypothesis of the form:
\[ H_1^*:~ \Sigma_p \textrm{ has the spiked structure \protect\eqref{model}.}
\]
In other words, we want to test the absence against the presence of
possible spike eigenvalues in the population covariance matrix.
The general asymptotic expansion in Theorem~\ref{formula}
helps to find the power function of the test.

More precisely, under the alternative $H_1^*$ and
for $f(x)=x-\log x-1$ used in the
statistic  $\tilde{L}^{*}$, the centering  term
$F^{y_n,H_n}(f)$ can be found to be
\[
1+\frac{1}{p}\sum_{i=1}^{k}n_ia_i-\frac{M}{p}-\frac{1}{p}\sum_{i=1}^{k}n_i\log a_i-(1-\frac{1}{y_n})\log(1-y_n)+O(\frac{1}{n^2})~,
\]
thanks to the following formulas
\begin{eqnarray}\label{app1}
  F^{y_n, H_n}(x)=1+\frac{1}{p}\sum_{i=1}^{k}n_i a_i-\frac{M}{p}+O(\frac{1}{n^2})~
\end{eqnarray}
and
\begin{eqnarray}\label{app2}
F^{y_n, H_n}(\log x)=\frac{1}{p}\sum_{i=1}^{k}n_i\log a_i-1+(1-\frac{1}{y_n})\log(1-y_n)+O(\frac{1}{n^2})~.
\end{eqnarray}
The details of derivation of these formulas are given  in the Appendix \ref{aa}.

Therefore we have obtained that under $H_1^{*}$,
\[
\tilde{L}^{*}-pF^{y_n, H_n} (f) \Rightarrow N(m(g), v(g))~.
\]
It follows that
the asymptotic power function of the test is
\[
\beta(\alpha)=1-\Phi\Bigg(\Phi^{-1}(1-\alpha)-\frac{\sum_{i=1}^{k}n_i(a_i-1-\log a_i)}{\sqrt{-2\log(1-y)-2y}}\Bigg)~.
\]
In the particular case where the spiked model has
only one simple close spike, i.e. $k=1$, $k_1=0$, $n_1=1$,
the above power function becomes
\[
\beta(\alpha)=1-\Phi\left(\Phi^{-1}(1-\alpha)-\frac{a_1-1-\log a_1}{\sqrt{-2\log(1-y)-2y}}\right)~,
\]
which is exactly the formula $(5.6)$ found  in \citet{Onatski11}.
Note that these authors have found this formula using a sophisticated
tools  of asymptotic contiguity and Le Cam's first and third lemmas,
our derivation is in a sense much more direct.

\appendix

\section{Additional proofs of $\eqref{app1}$ and $\eqref{app2}$}\label{aa}
\reu{The likelihood ratio test works only when $0<y_n<1$, and when $k_1+1\leq i\leq k$, the corresponding $a_i$ satisfy  $|a_i-1|\leq \sqrt{y_n}$, which is equivalent to  $-\frac{1}{a_i} \in [\frac{-1}{1-\sqrt{y_n}},\frac{-1}{1+\sqrt{y_n}}]$, so poles of $\{\underline{m}=-1\}$, $\{\underline{m}=-\frac{1}{a_i},i=(k_1+1,\cdots,k)\}$  and $\{\underline{m}=\frac{1}{y_n-1}\}$ (pole of the function $log z$) should be included in $\mathcal{C}_1$. In all the following, we write $m$ to stand for $\underline{m}$ for convenience.}
\subsection{Proof of $\eqref{app1}$}
We have
\begin{eqnarray}\label{x1}
\eqref{th1}=-\frac{1}{2\pi i p}\oint_{\mathcal{C}_1}(-\frac{1}{m}+\frac{y_n}{1+m})(\frac{M}{y_n m}-\sum_{i=1}^k \frac{n_i a_i^2 m}{(1+a_i m)^2})dm~,
\end{eqnarray}
and its residual at $m=-1$ equals to
\begin{eqnarray}\label{nn1}
\frac{M}{p}-\frac{y_n}{p}\sum_{i=1}^{k}\frac{n_i a_i^2}{(1-a_i)^2}~.
\end{eqnarray}
\begin{eqnarray}\label{x2}
\eqref{th2}=\frac{1}{2 \pi i p}\oint_{\mathcal{C}_1}\sum_{i=1}^{k}\frac{(1-a_i)n_i}{(1+a_i m)(1+m)}\bigg(\frac{1}{m}-\frac{y_n m}{(1+m)^2}\bigg)dm~,
\end{eqnarray}
and its residual at $m=-1$ equals to
\begin{eqnarray}\label{nn2}
&&\frac{1}{p}\sum_{i=1}^{k}\bigg[-n_i-\frac12(1-a_i)n_i y_n\frac{\partial}{\partial m^2}\Big(\frac{m}{1+a_i m}\Big)^2\Big|_{m=-1}\bigg]\nonumber\\
&=&\frac{1}{p}\sum_{i=1}^{k}\bigg[-n_i+\frac{a_i n_i y_n}{(1-a_i)^2}\bigg]~.
\end{eqnarray}
Besides, the residual of \eqref{x1}+\eqref{x2} at $m=-\frac{1}{a_i}$, $i=(k_1+1,\cdots, k)$ can be calculated as
\begin{eqnarray}\label{nn3}
\frac{1}{p}n_i\bigg(a_i+\frac{y_n a_i}{a_i-1}\bigg)~.
\end{eqnarray}
\begin{eqnarray}\label{x3}
\eqref{th3}&=&1-\frac{M}{p}+\frac{1}{p}\sum_{i=1}^{k_1}n_i\bigg(a_i+\frac{y_n a_i}{a_i-1}\bigg)+O(\frac{1}{n^2})~.
\end{eqnarray}
Combine \eqref{nn1}, \eqref{nn2}, \eqref{nn3} and \eqref{x3}, we get:
\begin{eqnarray*}
F^{y_n, H_n}(x)=1+\frac{1}{p}\sum_{i=1}^{k}n_i a_i-\frac{M}{p}+O(\frac{1}{n^2})~.
\end{eqnarray*}

\subsection{Proof of $\eqref{app2}$}
We first calculate \eqref{th1} and \eqref{th2} by considering their residuals at $m=-1$.
\begin{eqnarray}\label{11}
\eqref{th1}&=&\frac{-1}{2\pi i p y_n}\oint_{\mathcal{C}_1}\frac{\log(\frac{y_n-1}{m})+\log(\frac{m-\frac{1}{y_n-1}}{m+1})}{m}(M-\sum_{i=1}^{k}\frac{n_i a_i^2y_n m^2}{(1+a_im)^2})dm\nonumber\\
&=&\frac{-M}{2\pi i p y_n}\oint_{\mathcal{C}_1}\log(\frac{m-\frac{1}{y_n-1}}{m+1})\frac{1}{m}dm\nonumber\\
&&+
   \frac{1}{2\pi i p y_n}\oint_{\mathcal{C}_1}\log(\frac{m-\frac{1}{y_n-1}}{m+1})\sum_{i=1}^{k}\frac{n_i a_i^2 y_n m}{(1+a_i m)^2}dm\nonumber\\
&\triangleq& A+B~.
\end{eqnarray}
\begin{eqnarray}\label{a}
A&=&\frac{-M}{2\pi i p y_n}\oint_{\mathcal{C}_1}\log(\frac{m-\frac{1}{y_n-1}}{m+1})\cdot d \log m\nonumber\\
&=&\frac{M}{2\pi i p y_n}\oint_{\mathcal{C}_1}\log m \cdot d\log(\frac{m-\frac{1}{y_n-1}}{m+1})\nonumber\\
&=&\frac{M}{2\pi i p y_n}\cdot \frac{y_n}{y_n-1}\oint_{\mathcal{C}_1}\frac{\log m}{(m+1)(m-\frac{1}{y_n-1})}dm\nonumber\\
&=&-\frac{M}{p y_n}\log(1-y_n)~,
\end{eqnarray}
\begin{eqnarray}\label{b}
B&=&\frac{1}{2\pi i p}\oint_{\mathcal{C}_1}\log(\frac{m-\frac{1}{y_n-1}}{m+1})\sum_{i=1}^{k}\frac{n_i a_i^2 m}{(1+a_i m)^2}dm\nonumber\\
&=&\frac{1}{2\pi i p}\sum_{i=1}^{k}\oint_{\mathcal{C}_1}\log(\frac{m-\frac{1}{y_n-1}}{m+1})n_i a_i(\frac{1}{1+a_i m}-\frac{1}{(1+a_i m)^2})dm\nonumber\\
&\triangleq&C-D~,
\end{eqnarray}
where
\begin{eqnarray}\label{c}
C&=&\frac{1}{2\pi i p}\sum_{i=1}^{k}\oint_{\mathcal{C}_1}\log(\frac{m-\frac{1}{y_n-1}}{m+1})\frac{n_i a_i}{1+a_i m}dm\nonumber\\
&=&\frac{1}{2\pi i p}\sum_{i=1}^{k}\oint_{\mathcal{C}_1}n_i\log(\frac{m-\frac{1}{y_n-1}}{m+1})\cdot d\log({1+a_i m})\nonumber\\
&=&\frac{-1}{2\pi i p}\sum_{i=1}^{k}\oint_{\mathcal{C}_1}n_i\log{(1+a_i m)}\cdot d\log(\frac{m-\frac{1}{y_n-1}}{m+1})\nonumber\\
&=&\frac{-1}{2\pi i p}\cdot\frac{y_n}{y_n-1}\sum_{i=1}^{k}\oint_{\mathcal{C}_1}\frac{n_i\log{(1+a_i m)}}{(m+1)(m-\frac{1}{y_n-1})}dm\nonumber\\
&=&\frac1p\sum_{i=1}^{k}n_i\log(1-a_i)-\frac1p\sum_{i=1}^{k}n_i\log(1+\frac{a_i}{y_n-1})~,
\end{eqnarray}
and
\begin{eqnarray}\label{d}
D&=&\frac{1}{2\pi ip}\sum_{i=1}^{k}\oint_{\mathcal{C}_1}\log(\frac{m-\frac{1}{y_n-1}}{m+1})\frac{n_i a_i}{(1+a_i m)^2}dm\nonumber\\
&=&\frac{1}{2\pi ip}\sum_{i=1}^{k}\oint_{\mathcal{C}_1}\frac{n_i}{1+a_i m}\cdot d\log(\frac{m-\frac{1}{y_n-1}}{m+1})\nonumber\\
&=&\frac{y_n}{2\pi i p(y_n-1)}\sum_{i=1}^{k}\oint_{\mathcal{C}_1}\frac{n_i}{(1+a_i m)(m-\frac{1}{y_n-1})(m+1)}dm\nonumber\\
&=&\frac{1}{p}\sum_{i=1}^{k}(\frac{n_i}{1+\frac{a_i}{y_n-1}}-\frac{n_i}{1-a_i})~.
\end{eqnarray}
Combine \eqref{11}, \eqref{a}, \eqref{b}, \eqref{c} and \eqref{d}, we get the residual of \eqref{th1} at $m=-1$:
\begin{eqnarray}\label{12}
&&-\frac{M}{p y_n}\log(1-y_n)+\frac{1}{p}\sum_{i=1}^{k}n_i\log(1-a_i)-\frac{1}{p}\sum_{i=1}^{k}n_i\log(1+\frac{a_i}{y_n-1})\nonumber\\
&&-\frac{1}{p}\sum_{i=1}^{k}\frac{n_i}{1+\frac{a_i}{y_n-1}}+\frac{1}{p}\sum_{i=1}^{k}\frac{n_i}{1-a_i}~.
\end{eqnarray}
Then, we consider the part \eqref{th2} in the general formula influenced by the pole $m=-1$:
\begin{eqnarray*}
\eqref{th2}&=&-\frac{1}{2\pi i p}\oint_{\mathcal{C}_1}f^{'}(-\frac{1}{m}+\frac{y_n}{1+m})\sum_{i=1}^{k}(\frac{n_i a_i}{1+a_i m}-\frac{n_i}{1+m})(\frac{1}{m}-\frac{y_n m}{(1+m)^2})dm\\
&=&-\frac{1}{2\pi i p}\sum_{i=1}^{k}n_i\oint_{\mathcal{C}_1}\frac{m(m+1)}{y_n m-m-1}(\frac{a_i}{1+a_i m}-\frac{1}{1+m})(\frac{1}{m}-\frac{y_n m}{(1+m)^2})dm\\
&\triangleq &\frac{-1}{2\pi ip(y_n-1)}\sum_{i=1}^{k}n_i(E-F-G+H)~,
\end{eqnarray*}
where
\begin{eqnarray*}
&&E=\oint_{\mathcal{C}_1}\frac{a_i(m+1)}{(1+a_i m)(m-\frac{1}{y_n-1})}=2\pi i\frac{y_n a_i}{y_n+a_i-1}~,\\
&&F=\oint_{\mathcal{C}_1}\frac{a_i y_nm^2}{(m+1)(1+a_i m)(m-\frac{1}{y_n-1})}=2\pi i(\frac{a_i(y_n-1)}{a_i-1}+\frac{a_i}{y_n+a_i-1})~,\\
&&G=\oint_{\mathcal{C}_1}\frac{1}{m-\frac{1}{y_n-1}}=2\pi i~,\\
&&H=\oint_{\mathcal{C}_1}\frac{y_n m^2}{(m+1)^2(m-\frac{1}{y_n-1})}dm=2\pi iy_n~.
\end{eqnarray*}
Collecting these four terms, we have the residual of \eqref{th2} at $m=-1$:
\begin{eqnarray}\label{22}
\frac{1}{p}\sum_{i=1}^{k}(\frac{1}{a_i-1}-\frac{a_i}{y_n+a_i-1})n_i~.
\end{eqnarray}
Then we consider the influence of \eqref{th1}+\eqref{th2} caused by the pole $m=-\frac{1}{a_i}$, $i=k_1+1, \cdots, k$, which can be calculated similarly as
\begin{eqnarray}\label{nn4}
\frac{n_i}{p}\log(a_i+\frac{y_na_i}{a_i-1})~.
\end{eqnarray}
Finally, using the known result that $G^{y_n}(\log x)=(1-\frac{1}{y_n})\log(1-y_n)-1$, which has been calculated in \citet{BS04}, and combine \eqref{12}, \eqref{22}, \eqref{nn4} and \eqref{th3}, we get
\begin{eqnarray*}
F^{y_n, H_n}(\log x)=\frac{1}{p}\sum_{i=1}^{k}n_i\log a_i-1+(1-\frac{1}{y_n})\log(1-y_n)+O(\frac{1}{n^2})~.
\end{eqnarray*}

\end{document}